\documentclass{amsart}

\usepackage{graphicx}

\newtheorem{theorem}{Theorem}[section]

\theoremstyle{definition}
\newtheorem{definition}[theorem]{Definition}

\theoremstyle{remark}

\numberwithin{equation}{section}

\newcommand{\abs}[1]{\lvert#1\rvert}

\newcommand{\blankbox}[2]{%
  \parbox{\columnwidth}{\centering
    \setlength{\fboxsep}{0pt}%
    \fbox{\raisebox{0pt}[#2]{\hspace{#1}}}%
  }%
}

\usepackage{tikz}
\usetikzlibrary{arrows}
\usepackage{caption}

\usepackage{multicol}

\newenvironment{Figure}
  {\par\medskip\noindent\minipage{\linewidth}}
  {\endminipage\par\medskip}

\begin{document}

\title{Framework of two-dimensional functional walks}

\author{Fabian Schneider}
\address{Max-Planck Institute for Mathematics, Bonn, Germany}
\email{fschn@mpim-bonn.mpg.de}

\date{September 2017}
\keywords{Functional walk theory, dynamical systems}

\begin{abstract}
This paper gives a general introduction to two-dimensional functional walks with particular attention to notation and definition. We also give applications of functional walks and a visual overview of some walks generated by $f(n)=n^2$ and $f(n)=n^3$.
\end{abstract}

\maketitle

\section{Definition and Notation}

Let $v_n$ denote graph vertices given as vectors from the origin and let $e_n = v_n - v_{n-1}$ denote the graph edges as vectors from $v_{n-1}$ to $v_n$. A walk $W$ is defined as sequence $v_0, e_1, v_1, e_2, v_2, ...$ \cite{A}:

\begin{figure}[!h]
\centering
\begin{tikzpicture}
\draw (0,0) node[align=left,below] {\small $v_0$} -- node[pos=0.4,align=left,above] {\scriptsize $e_1$}
(1,1) node[align=left,above] {\small $v_1$} -- node[pos=0.6,align=left,above] {\scriptsize $e_2$}
(2,0) node[align=left,below] {\small $v_2$} -- node[pos=0.4,align=left,above] {\scriptsize $e_3$}
(3,0.7) node[align=left,above] {\small $v_3$};
\draw[dotted] (3,0.7) -- (3.5,0.5);
\end{tikzpicture}
\end{figure}

\noindent A functional walk $W$ on a two-dimensional plane is iteratively generated using the functions $f : \mathbb{N} \rightarrow \mathbb{N}$ and $g : \mathbb{N} \rightarrow \mathbb{R}$ and the whole integer $m$. The function $f(n)$ determines the direction $\varphi(n)$ of the edge $e_n$ by dividing $2\pi$ into $m$ parts, i.e. an unit circle into $m$ possible directions:
\begin{align*}
    \varphi(n) = \dfrac{2\pi}{m} \cdot ( f(n) ~\text{mod}~ m ).
\end{align*}
The length of the edge $e_n$ is given by $g(n)$. We define that $v_0 = (0,0)$. Thus the vertex vector $v_n$ is given by the sum of edge vectors
\begin{align*}
    v_n = v_0 + e_1 + e_2 + ... + e_{n} = \sum_{t=1}^n g(t)\binom{\cos \varphi(t)}{\sin \varphi(t)}.
\end{align*}
Since a two-dimensional functional walk is defined by $f$, $g$ and $m$, we will introduce the notation $W:\langle f ~|~ g\rangle_m$. In case of $g(n)=1$, we write $W:\langle f \rangle_m$ and omit $g(n)$. Note that the set of vertices $v_n$ and edges $e_n$ to describe a walk is in general given as infinite in context of functional walks. We will use the terms \emph{functional walk} and \emph{walk} interchangeable. The concept of functional walks can be categorized to dynamical systems \cite{ds} and iterated function systems \cite{ifs}.


\section{Closed and Open walks}

\begin{definition}
    A walk $W$ with vertices $v_i$ and edges $e_i$ is called \emph{closed} if there exists a finite set $\{v_k,e_k,v_{k+1},e_{k+1},...,v_{K},e_{K} \}$ so that all $v_i, e_i$ are include in this set for any value of $i$. Otherwise the walk is called \emph{open}.
\end{definition}
The edges and vertices of the walk $W_S:\langle n \rangle_4$ repeat after four steps and end at the origin, thus the walk describes a square and we call it closed. The Walk $W_{Sp} : \langle n | n \rangle_4$ describes a square spiral which vertices and edges do not repeat, thus we call it open.

\begin{figure}[!h]
\centering
\begin{tikzpicture}
\draw (0,0) -- (0,1.9) -- (1.9,1.9) -- (1.9,0) -- (0,0);
\node at (1.02,-0.5) {\small $W_S:\langle n \rangle_4$};

\draw (4.0,1.0) -- (4.0,1.0) -- (4.0,1.1) -- (3.8,1.1) -- (3.8,0.8) -- (4.2,0.8) -- (4.2,1.3) -- (3.6,1.3) -- (3.6,0.6) -- (4.4,0.6) -- (4.4,1.5) -- (3.4,1.5) -- (3.4,0.4) -- (4.6,0.4) -- (4.6,1.7) -- (3.2,1.7) -- (3.2,0.2) -- (4.8,0.2) -- (4.8,1.9) -- (3.0,1.9) -- (3.0,0.0);
\draw[dotted] (3.0,0.0) -- (5.0,0.0);
\node at (3.93,-0.5) {\small $W_{Sp} : \langle n | n \rangle_4$};
\end{tikzpicture}
\end{figure}

\noindent Note that all illustrations of open walks posses dots at the \textit{end} to distinguish them from closed walks. 

\section{Bounded and Unbounded walks}

Let $|v_n|$ denote the distance of the $n$th vertex from the origin. There exist walks such as $W_{Sp} : \langle n | n \rangle_4$ that continuously increase in size, i.e. the distance of the vertices from the origin becomes arbitrarily large. Other walks such as $W_B : \langle n | 1/n \rangle_4$, which describes a square spiral that turns from the outside to the inside, are limited in size.

\begin{definition}
    If there exists a constant $B$ such that $B \geq |v_n|$ for any vertex $v_n$ of the walk $W$, we call $W$ \emph{bounded}. Otherwise we call $W$ an \emph{unbounded} walk.
\end{definition}

\noindent Note that closed walks are always bounded with $B = \max\{|v_0|,~ |v_1|,~ |v_2|,~ ...\}$.

\section{Converging walks}

The vertices of the walk $W: \langle n | 1/n \rangle_4$ will come arbitrary close to the point $(0,0)$. Therefore we say that this graph converges.

\begin{figure}[!h]
\centering
\begin{tikzpicture}
\draw (0.0,0.0) -- (3.0,0.0) -- (3.0,2.5) -- (0.8571,2.5) -- (0.8571,0.625) -- (2.5238,0.625) -- (2.5238,2.125) -- (1.1602,2.125) -- (1.1602,0.875) -- (2.314,0.875) -- (2.314,1.9464) -- (1.314,1.9464) -- (1.314,1.0089) -- (2.1964,1.0089) -- (2.1964,1.8423) -- (1.4069,1.8423) -- (1.4069,1.0923) -- (2.1212,1.0923) -- (2.1212,1.7741) -- (1.469,1.7741) -- (1.469,1.1491) -- (2.069,1.1491) ;
\draw[dotted] (2.069,1.1491) -- (2.069,1.726) -- (1.5135,1.726) -- (1.5135,1.1903);
\node at (1.5,-0.5) {\small $\langle n | 1/n \rangle_4$};
\end{tikzpicture}
\end{figure}

\begin{definition}
    If there exists a constant $C$ such that $|C - |v_n|| < \varepsilon$ for any arbitrary small positive number $\varepsilon$, we call $W$ a converging walk.
\end{definition}

\section{Repetition Walks}

Let $W_O$ denote some open walk. Although the distance of the vertices of $W_O$ from the origin increase and you can not reduce the walk to a closed figure, it may consist out of repetitions of a single finite walk. 

\begin{definition}
    We call a walk \emph{repetitive} if there exists a \emph{repetition index} $R$ so that for any whole integer $k$
    \begin{align*}
        0 = \sum^{R}_{i=1} | (v_{Rk+i} - v_{Rk}) - v_i |,
    \end{align*}
    i.e. cutting the walk consecutively in pieces that contain $R$ vertices and moving all pieces to the origin $v_0$ results in a single finite walk with vertices $v_0, v_1,...,v_R$ which we call the \emph{repetition figure}.
\end{definition}

Consider the walk $\langle n^2 \rangle_5$ as shown in the figure below. This repetitive walk has a repetition index of $R = 5$. 

\begin{figure}[!ht]
\centering
\begin{tikzpicture}
\draw (1.118,-0.0) -- (1.618,-0.0) -- (1.7725,0.4755) -- (1.9271,-0.0) -- (2.0816,-0.4755) -- (2.2361,-0.0) -- (2.7361,-0.0) -- (2.8906,0.4755) -- (3.0451,-0.0) -- (3.1996,-0.4755) -- (3.3541,-0.0) -- (3.8541,-0.0) -- (4.0086,0.4755) -- (4.1631,-0.0) -- (4.3176,-0.4755) -- (4.4721,-0.0) -- (4.9721,-0.0) -- (5.1266,0.4755) -- (5.2812,-0.0) -- (5.4357,-0.4755) -- (5.5902,-0.0) -- (6.0902,-0.0) -- (6.2447,0.4755) -- (6.3992,-0.0) -- (6.5537,-0.4755) -- (6.7082,-0.0) -- (7.2082,-0.0) -- (7.3627,0.4755) -- (7.5172,-0.0) -- (7.6717,-0.4755) -- (7.8262,-0.0) -- (8.3262,-0.0) -- (8.4807,0.4755) -- (8.6353,-0.0) -- (8.7898,-0.4755) -- (8.9443,-0.0) -- (9.4443,-0.0) -- (9.5988,0.4755) -- (9.7533,-0.0) -- (9.9078,-0.4755) -- (10.0623,-0.0);
\draw[dotted] (9.9078,-0.4755) -- (10.0623,-0.0) -- (10.5623,-0.0) -- (10.7168,0.4755) -- (10.8713,-0.0) -- (11.0258,-0.4755) -- (11.1803,-0.0);
\end{tikzpicture}
\caption*{$\langle n^{2} \rangle_{5}$}
\end{figure}

The repetition figure is shown as dotted shape at the end. Note that all illustrations of repetition walks posses a dotted shape of the repetition figure.

\section{Evolving walks}

Given the functions $f$ and $g$ of the walk $W$ it is the constant $m$ that determines the shape of the walk. The shape of some walks evolves from an initial walk for small $m$ to a more detailed shape for increasing $m$. This can be observed for various walks such as $\langle n^2 \rangle_{4m+2}$. With increasing $m$ we see how the initial simple structure becomes a detailed Cornu spiral \cite{cornu}.

\begin{figure}[!ht]
\centering
\begin{tikzpicture}
\draw (0.0,0.0) -- (0.5,0.0) -- (0.75,0.433) -- (0.5,0.0) -- (-0.0,0.0) -- (-0.25,-0.433) -- (-0.0,0.0) -- (0.5,0.0) -- (0.75,0.433) -- (0.5,0.0) -- (-0.0,0.0) -- (-0.25,-0.433) -- (-0.0,0.0) -- (0.5,0.0) -- (0.75,0.433) -- (0.5,0.0) -- (-0.0,0.0) -- (-0.25,-0.433) -- (-0.0,0.0) -- (0.5,0.0) -- (0.75,0.433) -- (0.5,0.0) -- (-0.0,0.0) -- (-0.25,-0.433) -- (-0.0,0.0) -- (0.5,0.0) -- (0.75,0.433) -- (0.5,0.0) -- (-0.0,0.0) -- (-0.25,-0.433) -- (-0.0,0.0) -- (0.5,0.0) -- (0.75,0.433) -- (0.5,0.0) -- (-0.0,0.0) -- (-0.25,-0.433) -- (-0.0,0.0) -- (0.5,0.0) -- (0.75,0.433) -- (0.5,0.0) -- (-0.0,0.0) -- (-0.25,-0.433) -- (-0.0,0.0) -- (0.5,0.0) -- (0.75,0.433) -- (0.5,0.0);

\draw (3.0,0.0) -- (3.5,0.0) -- (3.9698,0.171) -- (4.0567,0.6634) -- (3.5567,0.6634) -- (3.9397,0.342) -- (3.5567,0.6634) -- (4.0567,0.6634) -- (3.9698,0.171) -- (3.5,0.0) -- (3.0,0.0) -- (2.5302,-0.171) -- (2.4433,-0.6634) -- (2.9433,-0.6634) -- (2.5603,-0.342) -- (2.9433,-0.6634) -- (2.4433,-0.6634) -- (2.5302,-0.171) -- (3.0,0.0) -- (3.5,0.0) -- (3.9698,0.171) -- (4.0567,0.6634) -- (3.5567,0.6634) -- (3.9397,0.342) -- (3.5567,0.6634) -- (4.0567,0.6634) -- (3.9698,0.171) -- (3.5,0.0) -- (3.0,0.0) -- (2.5302,-0.171) -- (2.4433,-0.6634) -- (2.9433,-0.6634) -- (2.5603,-0.342) -- (2.9433,-0.6634) -- (2.4433,-0.6634) -- (2.5302,-0.171) -- (3.0,0.0) -- (3.5,0.0) -- (3.9698,0.171) -- (4.0567,0.6634) -- (3.5567,0.6634) -- (3.9397,0.342) -- (3.5567,0.6634) -- (4.0567,0.6634) -- (3.9698,0.171) -- (3.5,0.0);

\draw (6.0,0.0) -- (6.5,0.0) -- (6.9891,0.104) -- (7.3236,0.4755) -- (7.1691,0.9511) -- (6.6801,0.8471) -- (6.9301,0.4141) -- (7.0846,0.8896) -- (6.75,0.518) -- (7.0846,0.8896) -- (6.9301,0.4141) -- (6.6801,0.8471) -- (7.1691,0.9511) -- (7.3236,0.4755) -- (6.9891,0.104) -- (6.5,0.0) -- (6.0,0.0) -- (5.5109,-0.104) -- (5.1764,-0.4755) -- (5.3309,-0.9511) -- (5.8199,-0.8471) -- (5.5699,-0.4141) -- (5.4154,-0.8896) -- (5.75,-0.518) -- (5.4154,-0.8896) -- (5.5699,-0.4141) -- (5.8199,-0.8471) -- (5.3309,-0.9511) -- (5.1764,-0.4755) -- (5.5109,-0.104) -- (6.0,0.0) -- (6.5,0.0) -- (6.9891,0.104) -- (7.3236,0.4755) -- (7.1691,0.9511) -- (6.6801,0.8471) -- (6.9301,0.4141) -- (7.0846,0.8896) -- (6.75,0.518) -- (7.0846,0.8896) -- (6.9301,0.4141) -- (6.6801,0.8471) -- (7.1691,0.9511) -- (7.3236,0.4755) -- (6.9891,0.104) -- (6.5,0.0);

\draw (9.0,0.0) -- (9.45,0.0) -- (9.8977,0.0455) -- (10.3112,0.223) -- (10.5867,0.5788) -- (10.5639,1.0283) -- (10.1945,1.2853) -- (9.8011,1.0669) -- (9.9139,0.6313) -- (10.3547,0.7219) -- (10.1984,1.1439) -- (9.8569,0.8508) -- (10.2863,0.716) -- (10.0881,1.1201) -- (10.02,0.6753) -- (10.258,1.0571) -- (9.948,0.731) -- (10.258,1.0571) -- (10.02,0.6753) -- (10.0881,1.1201) -- (10.2863,0.716) -- (9.8569,0.8508) -- (10.1984,1.1439) -- (10.3547,0.7219) -- (9.9139,0.6313) -- (9.8011,1.0669) -- (10.1945,1.2853) -- (10.5639,1.0283) -- (10.5867,0.5788) -- (10.3112,0.223) -- (9.8977,0.0455) -- (9.45,-0.0) -- (9.0,0.0) -- (8.5523,-0.0455) -- (8.1388,-0.223) -- (7.8633,-0.5788) -- (7.8861,-1.0283) -- (8.2555,-1.2853) -- (8.6489,-1.0669) -- (8.5361,-0.6313) -- (8.0953,-0.7219) -- (8.2516,-1.1439) -- (8.5931,-0.8508) -- (8.1637,-0.716) -- (8.3619,-1.1201) -- (8.43,-0.6753);

\end{tikzpicture}
\caption*{$\langle n^{2} \rangle_{6}$ , $\langle n^{2} \rangle_{18}$ , $\langle n^{2} \rangle_{30}$ and $\langle n^{2} \rangle_{62}$}
\end{figure}

Note that all these walks are closed and that the size increases due to constant $g=1$ but an increasing number of edges that make up the figure.

\section{Equalities of Walks}

In order to compare two walks we introduce following definition:

\begin{definition}
    Let $v_1,...$ denote the vertices of $\langle f_1 | g_1 \rangle_{m_1}$ and $w_1,...$ the vertices of $\langle f_2 | g_2 \rangle_{m_2}$. We write $\langle f_1 | g_1 \rangle_{m_1} = \langle f_2 | g_2 \rangle_{m_2}$ if $|v_i - w_i| = 0$ for all $i$.
\end{definition}

However, some walks have the exact same shape but different sizes due to different $g$ such as $\langle n | n \rangle_{4}$ and $\langle n | n+5 \rangle_{4}$. In order to express that two walks have the same shape we introduce another definition:

\begin{definition}
    We write $\langle f_1 | g_1 \rangle_{m_1} \equiv \langle f_2 | g_2 \rangle_{m_2}$ if there exists a real $j$ so that $\langle f_1 | g_1 \rangle_{m_1} = \langle f_2 | g_2 + j \rangle_{m_2}$ holds true.
\end{definition}

\section{Regular Polygons}

\begin{theorem}
    A $m$-sided regular polygon is described by the walk $W_p : \langle n \rangle_m$.
\end{theorem}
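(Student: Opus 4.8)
The plan is to translate the walk into complex-number notation, where the unit edges become powers of a single root of unity, and then to recognize the vertex sequence as a similarity image of the $m$-th roots of unity, which manifestly form a regular polygon. This turns the geometric claim into a one-line geometric-series computation.

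First I would set $\omega = e^{2\pi i/m}$, with $i$ the imaginary unit, and identify each edge vector $\binom{\cos\varphi(t)}{\sin\varphi(t)}$ with the complex number $e^{i\varphi(t)}$. Since $f(n)=n$ gives $\varphi(t) = \frac{2\pi}{m}(t \bmod m)$ and since $\omega^m = 1$, the reduction modulo $m$ is invisible to the exponential, so the $t$-th edge is simply $\omega^{t \bmod m} = \omega^t$. Identifying $v_n$ with a complex number, the defining sum from Section 1 becomes a geometric series
\begin{align*}
v_n = \sum_{t=1}^n \omega^t = \frac{\omega}{\omega - 1}\,(\omega^n - 1),
\end{align*}
which is valid because $\omega \neq 1$ for $m \geq 2$ (the cases $m=1,2$ give a degenerate point or segment and may be set aside).

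Next I would read off the geometric content. Writing $c = \omega/(\omega-1)$, a fixed nonzero constant, we have $v_n = c(\omega^n - 1)$, so the vertex sequence is the image of the points $\omega^n$ under the map $z \mapsto c(z-1)$. As $n$ ranges over $0,1,\ldots,m-1$, the points $\omega^n$ run through exactly the $m$-th roots of unity, i.e. the vertices of a regular $m$-gon inscribed in the unit circle. The map $z \mapsto c(z-1)$ is a composition of a translation, a rotation by $\arg c$, and a scaling by $\abs{c}$ — a \emph{similarity} — and similarities carry regular polygons to regular polygons. Hence $v_0,\ldots,v_{m-1}$ are the vertices of a regular $m$-sided polygon. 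Closure comes from the same formula, since $v_m = c(\omega^m - 1) = 0 = v_0$, so by the definition in Section 2 the walk is closed and traverses the $m$ equal-length edges exactly once.

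The main obstacle I anticipate is bookkeeping rather than anything conceptual. Two checks must be made explicit: that the modular reduction $f(n)\bmod m$ truly disappears upon passing to $\omega^t$ (it does, as $\omega$ has order $m$), and that consecutive vertices are genuinely distinct, so that one obtains an honest $m$-gon rather than a retraced or collapsed figure. The latter reduces to confirming that the exterior turning angle between successive edges equals $2\pi/m$ at every step, including across the wrap-around where the direction returns from $2\pi(m-1)/m$ to $0$; because directions are defined only modulo $2\pi$, this turn is again $2\pi/m$, so all interior angles coincide and the equal-length, equal-angle polygon is regular as claimed.
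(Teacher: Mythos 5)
Your proof is correct, but it takes a genuinely different route from the paper's. The paper argues locally and geometrically: since $f(n)=n$, the turning angle between consecutive edges is the constant $\varphi(n+1)-\varphi(n)=2\pi/m$, and an equilateral path with constant exterior angle $2\pi/m$ closes up into a regular $m$-gon; the closure step ("the $m$th edge connects back to the origin") is simply asserted. You instead work globally with complex exponentials, summing the geometric series to get the explicit vertex formula $v_n=\frac{\omega}{\omega-1}(\omega^n-1)$ and recognizing the vertex set as a similarity image of the $m$-th roots of unity. Your route buys two things the paper's does not: the closure $v_m=v_0$ becomes a computation rather than a claim (it is exactly the identity $\sum_{t=1}^m\omega^t=0$), and you obtain the polygon's center $\omega/(1-\omega)$ and circumradius $1/\lvert\omega-1\rvert=1/(2\sin(\pi/m))$ for free. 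The paper's turning-angle argument is shorter and more visual, and it explains \emph{why} the figure looks regular edge by edge, but as written it is less complete than yours. Your closing remarks about the wrap-around of directions modulo $2\pi$ and the non-degeneracy of consecutive vertices are exactly the bookkeeping the paper glosses over; the only cases genuinely excluded in both treatments are $m=1,2$, which you correctly set aside.
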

\begin{proof}
    Since $m$ is a constant value and $f(n)=n$ increases linear, the edges point consecutively in the 1th, 2th, ..., $m$th direction, i.e. the angle between two edges differs always by $\varphi(n+1)-\varphi(n) = 2\pi / m$. Thus the $m$th edge connects back to the origin and because all the edges have the same length we can observe a $m$-sided regular polygon.
\end{proof}

The figure below shows the first walks generated by $\langle n \rangle_m$. Note that the size of the figures was reduced in favor of the alignment.

\begin{figure}[!ht]
\centering
\begin{tikzpicture}

\draw (0.0,0.0) -- (0.8,0.0) -- (0.4,0.6928) -- (-0.0,0.0);

\draw (2.0001,0.0) -- (2.7001,0.0) -- (2.7,0.7001) -- (2.0,0.7001) -- (2.0,0.0) -- (2.7001,0.0);

\draw (4.0,0.0) -- (4.5,0.0) -- (4.6545,0.4755) -- (4.25,0.7694) -- (3.8455,0.4755) -- (4.0,0.0) -- (4.5,0.0) -- (4.6545,0.4755) -- (4.25,0.7694) -- (3.8455,0.4755) -- (4.0,0.0);

\draw (5.9,0.0) -- (6.3,0.0) -- (6.5,0.3464) -- (6.3,0.6928) -- (5.9,0.6928) -- (5.7,0.3464) -- (5.9,-0.0) -- (6.3,-0.0) -- (6.5,0.3464) -- (6.3,0.6928) -- (5.9,0.6928);

\draw (7.6,0.0) -- (7.935,0.0) -- (8.1439,0.2619) -- (8.0693,0.5885) -- (7.7675,0.7339) -- (7.4657,0.5885) -- (7.3911,0.2619) -- (7.6,-0.0) -- (7.935,-0.0) -- (8.1439,0.2619) -- (8.0693,0.5885);

\end{tikzpicture}
\caption*{$\langle n \rangle_{3}$ , $\langle n \rangle_{4}$ , $\langle n \rangle_{5}$ , $\langle n \rangle_{6}$ and $\langle n \rangle_{7}$}
\end{figure}

In addition, the interpretation of a circle as polygon with infinite sides allows a circle to be given by the walk $W_c : \left\langle n \right\rangle_m$ with $m \longrightarrow \infty$.

\section{Applications and Conclusion}

Functional walks appear as an elegant method to express geometric figures and objects. The walk notation allows a compact representation of complex shapes with simple functions and it enables to express non-smooth objects such as polygons and more (see appendix) with smooth functions e.g. polynomials. The underlying mathematical principles generate a variety of sophisticated and detailed figures which could be used by graphic algorithms and more.  \\

The concepts of functional walks may also be used for e.g. geometric proofs of theorems. However, there are a variety of aspects of functional walks in general and the shape of certain walks that require further understanding and study.

\newpage
\appendix

\section{Collection of Walks with $f(n)=n^2$}

\begin{multicols}{3}

\begin{Figure}
\centering
\begin{tikzpicture}
\draw (0.0,0.0) -- (0.45,0.0) -- (0.225,0.3897) -- (0.0,0.7794);
\draw[dotted] (0.0,0.7794) -- (0.45,0.7794) -- (0.225,1.1691) -- (0.0,1.5588);
\end{tikzpicture}
\captionof*{ }{$\langle n^2 \rangle_{3}$}
\end{Figure}
\begin{Figure}
\centering
\begin{tikzpicture}
\draw (0.0,0.0) -- (0.45,0.0) -- (0.45,0.45);
\draw[dotted] (0.45,0.45) -- (0.9,0.45) -- (0.9,0.9);
\end{tikzpicture}
\captionof*{ }{$\langle n^2 \rangle_{4}$}
\end{Figure}
\begin{Figure}
\centering
\begin{tikzpicture}
\draw (0.0,0.0) -- (0.45,0.0) -- (0.5891,0.428) -- (0.7281,0.0) -- (0.8672,-0.428) -- (1.0062,0.0);
\draw[dotted] (1.0062,0.0) -- (1.4562,0.0) -- (1.5953,0.428) -- (1.7343,0.0) -- (1.8734,-0.428) -- (2.0125,0.0);
\end{tikzpicture}
\captionof*{ }{$\langle n^2 \rangle_{5}$}
\end{Figure}
\begin{Figure}
\centering
\begin{tikzpicture}
\draw (0.0,0.0) -- (0.7,0.0) -- (1.05,0.6062) -- (0.7,0.0) -- (0.0,0.0) -- (-0.35,-0.6062) -- (0.0,0.0);
\end{tikzpicture}
\captionof*{ }{$\langle n^2 \rangle_{6}$}
\end{Figure}
\begin{Figure}
\centering
\begin{tikzpicture}
\draw (0.0,0.0) -- (0.45,0.0) -- (0.7306,0.3518) -- (0.3251,0.1566) -- (0.225,0.5953) -- (0.1249,1.034) -- (-0.2806,0.8388) -- (0.0,1.1906);
\draw[dotted] (0.0,1.1906) -- (0.45,1.1906) -- (0.7306,1.5424) -- (0.3251,1.3472) -- (0.225,1.7859) -- (0.1249,2.2246) -- (-0.2806,2.0294) -- (0.0,2.3812);
\end{tikzpicture}
\captionof*{ }{$\langle n^2 \rangle_{7}$}
\end{Figure}
\begin{Figure}
\centering
\begin{tikzpicture}
\draw (0.0,0.0) -- (0.45,0.0) -- (0.7682,0.3182) -- (0.3182,0.3182) -- (0.6364,0.6364);
\draw[dotted] (0.6364,0.6364) -- (1.0864,0.6364) -- (1.4046,0.9546) -- (0.9546,0.9546) -- (1.2728,1.2728);
\end{tikzpicture}
\captionof*{ }{$\langle n^2 \rangle_{8}$}
\end{Figure}
\begin{Figure}
\centering
\begin{tikzpicture}
\draw (0.0,0.0) -- (0.45,0.0) -- (0.7947,0.2893) -- (0.3719,0.4432) -- (0.8219,0.4432) -- (0.9,0.0) -- (0.9781,-0.4432) -- (1.4281,-0.4432) -- (1.0053,-0.2893) -- (1.35,0.0);
\draw[dotted] (1.35,0.0) -- (1.8,0.0) -- (2.1447,0.2893) -- (1.7219,0.4432) -- (2.1719,0.4432) -- (2.25,0.0) -- (2.3281,-0.4432) -- (2.7781,-0.4432) -- (2.3553,-0.2893) -- (2.7,0.0);
\end{tikzpicture}
\captionof*{ }{$\langle n^2 \rangle_{9}$}
\end{Figure}
\begin{Figure}
\centering
\begin{tikzpicture}
\draw (0.0,0.0) -- (0.7,0.0) -- (1.2663,0.4114) -- (0.7,0.8229) -- (1.2663,0.4114) -- (0.7,0.0) -- (0.0,0.0) -- (-0.5663,-0.4114) -- (0.0,-0.8229) -- (-0.5663,-0.4114) -- (0.0,0.0);
\end{tikzpicture}
\captionof*{ }{$\langle n^2 \rangle_{10}$}
\end{Figure}
\begin{Figure}
\centering
\begin{tikzpicture}
\draw (0.0,0.0) -- (0.45,0.0) -- (0.8286,0.2433) -- (0.5339,0.5834) -- (0.7208,0.174) -- (0.289,0.3008) -- (0.225,0.7462) -- (0.161,1.1917) -- (-0.2708,1.3184) -- (-0.0839,0.9091) -- (-0.3786,1.2492) -- (0.0,1.4925);
\draw[dotted] (0.0,1.4925) -- (0.45,1.4925) -- (0.8286,1.7358) -- (0.5339,2.0759) -- (0.7208,1.6665) -- (0.289,1.7933) -- (0.225,2.2387) -- (0.161,2.6841) -- (-0.2708,2.8109) -- (-0.0839,2.4016) -- (-0.3786,2.7417) -- (0.0,2.985);
\end{tikzpicture}
\captionof*{ }{$\langle n^2 \rangle_{11}$}
\end{Figure}
\begin{Figure}
\centering
\begin{tikzpicture}
\draw (0.0,0.0) -- (0.45,0.0) -- (0.8397,0.225) -- (0.6147,0.6147) -- (0.6147,0.1647) -- (0.3897,0.5544) -- (0.7794,0.7794);
\draw[dotted] (0.7794,0.7794) -- (1.2294,0.7794) -- (1.6191,1.0044) -- (1.3941,1.3941) -- (1.3941,0.9441) -- (1.1691,1.3338) -- (1.5588,1.5588);
\end{tikzpicture}
\captionof*{ }{$\langle n^2 \rangle_{12}$}
\end{Figure}
\begin{Figure}
\centering
\begin{tikzpicture}
\draw (0.0,0.0) -- (0.45,0.0) -- (0.8485,0.2091) -- (0.6889,0.6299) -- (0.5293,0.2091) -- (0.5836,0.6558) -- (0.982,0.4467) -- (1.0362,0.0) -- (1.0905,-0.4467) -- (1.4889,-0.6558) -- (1.5432,-0.2091) -- (1.3836,-0.6299) -- (1.224,-0.2091) -- (1.6225,0.0);
\draw[dotted] (1.6225,0.0) -- (2.0725,0.0) -- (2.471,0.2091) -- (2.3114,0.6299) -- (2.1518,0.2091) -- (2.206,0.6558) -- (2.6045,0.4467) -- (2.6587,0.0) -- (2.713,-0.4467) -- (3.1114,-0.6558) -- (3.1657,-0.2091) -- (3.0061,-0.6299) -- (2.8465,-0.2091) -- (3.245,0.0);
\end{tikzpicture}
\captionof*{ }{$\langle n^2 \rangle_{13}$}
\end{Figure}
\begin{Figure}
\centering
\begin{tikzpicture}
\draw (0.0,0.0) -- (0.7,0.0) -- (1.3307,0.3037) -- (1.1749,0.9862) -- (0.7385,0.4389) -- (1.1749,0.9862) -- (1.3307,0.3037) -- (0.7,0.0) -- (0.0,0.0) -- (-0.6307,-0.3037) -- (-0.4749,-0.9862) -- (-0.0385,-0.4389) -- (-0.4749,-0.9862) -- (-0.6307,-0.3037) -- (0.0,0.0);
\end{tikzpicture}
\captionof*{ }{$\langle n^2 \rangle_{14}$}
\end{Figure}
\begin{Figure}
\centering
\begin{tikzpicture}
\draw (0.0,0.0) -- (0.45,0.0) -- (0.8611,0.183) -- (0.8141,0.6306) -- (0.45,0.3661) -- (0.8611,0.5491) -- (0.6361,0.1594) -- (0.272,0.4239) -- (0.225,0.8714) -- (0.178,1.319) -- (-0.1861,1.5835) -- (-0.4111,1.1937) -- (0.0,1.3768) -- (-0.3641,1.1123) -- (-0.4111,1.5598) -- (0.0,1.7428);
\draw[dotted] (0.0,1.7428) -- (0.45,1.7428) -- (0.8611,1.9259) -- (0.8141,2.3734) -- (0.45,2.1089) -- (0.8611,2.2919) -- (0.6361,1.9022) -- (0.272,2.1667) -- (0.225,2.6143) -- (0.178,3.0618) -- (-0.1861,3.3263) -- (-0.4111,2.9366) -- (0.0,3.1196) -- (-0.3641,2.8551) -- (-0.4111,3.3027) -- (0.0,3.4857);
\end{tikzpicture}
\captionof*{ }{$\langle n^2 \rangle_{15}$}
\end{Figure}
\begin{Figure}
\centering
\begin{tikzpicture}
\draw (0.0,0.0) -- (0.45,0.0) -- (0.8657,0.1722) -- (0.8657,0.6222) -- (0.45,0.45) -- (0.9,0.45) -- (0.4843,0.2778) -- (0.4843,0.7278) -- (0.9,0.9);
\draw[dotted] (0.9,0.9) -- (1.35,0.9) -- (1.7657,1.0722) -- (1.7657,1.5222) -- (1.35,1.35) -- (1.8,1.35) -- (1.3843,1.1778) -- (1.3843,1.6278) -- (1.8,1.8);
\end{tikzpicture}
\captionof*{ }{$\langle n^2 \rangle_{16}$}
\end{Figure}
\begin{Figure}
\centering
\begin{tikzpicture}
\draw (0.0,0.0) -- (0.45,0.0) -- (0.8696,0.1626) -- (0.9111,0.6106) -- (0.4688,0.528) -- (0.8884,0.3654) -- (0.4461,0.4481) -- (0.7786,0.7512) -- (1.1112,0.4481) -- (1.1527,0.0) -- (1.1942,-0.4481) -- (1.5268,-0.7512) -- (1.8593,-0.4481) -- (1.417,-0.3654) -- (1.8366,-0.528) -- (1.3943,-0.6106) -- (1.4358,-0.1626) -- (1.8554,0.0);
\draw[dotted] (1.8554,0.0) -- (2.3054,0.0) -- (2.725,0.1626) -- (2.7665,0.6106) -- (2.3242,0.528) -- (2.7438,0.3654) -- (2.3015,0.4481) -- (2.634,0.7512) -- (2.9666,0.4481) -- (3.0081,0.0) -- (3.0496,-0.4481) -- (3.3822,-0.7512) -- (3.7147,-0.4481) -- (3.2724,-0.3654) -- (3.692,-0.528) -- (3.2497,-0.6106) -- (3.2912,-0.1626) -- (3.7108,0.0);
\end{tikzpicture}
\captionof*{ }{$\langle n^2 \rangle_{17}$}
\end{Figure}
\begin{Figure}
\centering
\begin{tikzpicture}
\draw (0.0,0.0) -- (0.7,0.0) -- (1.3578,0.2394) -- (1.4793,0.9288) -- (0.7793,0.9288) -- (1.3156,0.4788) -- (0.7793,0.9288) -- (1.4793,0.9288) -- (1.3578,0.2394) -- (0.7,0.0) -- (0.0,0.0) -- (-0.6578,-0.2394) -- (-0.7793,-0.9288) -- (-0.0793,-0.9288) -- (-0.6156,-0.4788) -- (-0.0793,-0.9288) -- (-0.7793,-0.9288) -- (-0.6578,-0.2394) -- (0.0,0.0);
\end{tikzpicture}
\captionof*{ }{$\langle n^2 \rangle_{18}$}
\end{Figure}
\begin{Figure}
\centering
\begin{tikzpicture}
\draw (0.0,0.0) -- (0.45,0.0) -- (0.8756,0.1461) -- (0.9861,0.5823) -- (0.5422,0.6564) -- (0.7883,0.2797) -- (0.6076,0.6918) -- (0.9627,0.4154) -- (0.5669,0.2012) -- (0.2622,0.5323) -- (0.225,0.9808) -- (0.1878,1.4292) -- (-0.1169,1.7603) -- (-0.5127,1.5461) -- (-0.1576,1.2697) -- (-0.3383,1.6818) -- (-0.0922,1.3051) -- (-0.5361,1.3792) -- (-0.4256,1.8154) -- (0.0,1.9615);
\draw[dotted] (0.0,1.9615) -- (0.45,1.9615) -- (0.8756,2.1076) -- (0.9861,2.5438) -- (0.5422,2.6179) -- (0.7883,2.2412) -- (0.6076,2.6533) -- (0.9627,2.3769) -- (0.5669,2.1627) -- (0.2622,2.4938) -- (0.225,2.9423) -- (0.1878,3.3907) -- (-0.1169,3.7218) -- (-0.5127,3.5076) -- (-0.1576,3.2312) -- (-0.3383,3.6433) -- (-0.0922,3.2666) -- (-0.5361,3.3407) -- (-0.4256,3.7769) -- (0.0,3.923);
\end{tikzpicture}
\captionof*{ }{$\langle n^2 \rangle_{19}$}
\end{Figure}
\begin{Figure}
\centering
\begin{tikzpicture}
\draw (0.0,0.0) -- (0.45,0.0) -- (0.878,0.1391) -- (1.017,0.567) -- (0.5891,0.7061) -- (0.7281,0.2781) -- (0.7281,0.7281) -- (0.8672,0.3001) -- (0.4392,0.4392) -- (0.5783,0.8672) -- (1.0062,1.0062);
\draw[dotted] (1.0062,1.0062) -- (1.4562,1.0062) -- (1.8842,1.1453) -- (2.0233,1.5733) -- (1.5953,1.7123) -- (1.7343,1.2843) -- (1.7343,1.7343) -- (1.8734,1.3064) -- (1.4454,1.4454) -- (1.5845,1.8734) -- (2.0125,2.0125);
\end{tikzpicture}
\captionof*{ }{$\langle n^2 \rangle_{20}$}
\end{Figure}
\begin{Figure}
\centering
\begin{tikzpicture}
\draw (0.0,0.0) -- (0.45,0.0) -- (0.88,0.1326) -- (1.0444,0.5515) -- (0.639,0.7468) -- (0.6726,0.298) -- (0.837,0.7169) -- (0.7369,0.2782) -- (0.5119,0.6679) -- (0.9419,0.8006) -- (1.2225,0.4487) -- (1.2561,0.0) -- (1.2897,-0.4487) -- (1.5703,-0.8006) -- (2.0003,-0.6679) -- (1.7753,-0.2782) -- (1.6752,-0.7169) -- (1.8396,-0.298) -- (1.8732,-0.7468) -- (1.4677,-0.5515) -- (1.6322,-0.1326) -- (2.0622,0.0);
\draw[dotted] (2.0622,0.0) -- (2.5122,0.0) -- (2.9422,0.1326) -- (3.1066,0.5515) -- (2.7011,0.7468) -- (2.7348,0.298) -- (2.8992,0.7169) -- (2.799,0.2782) -- (2.574,0.6679) -- (3.004,0.8006) -- (3.2846,0.4487) -- (3.3182,0.0) -- (3.3519,-0.4487) -- (3.6324,-0.8006) -- (4.0624,-0.6679) -- (3.8374,-0.2782) -- (3.7373,-0.7169) -- (3.9017,-0.298) -- (3.9353,-0.7468) -- (3.5299,-0.5515) -- (3.6943,-0.1326) -- (4.1243,0.0);
\end{tikzpicture}
\captionof*{ }{$\langle n^2 \rangle_{21}$}
\end{Figure}

\end{multicols}

\section{Collection of Walks with $f(n)=n^3$}

\begin{multicols}{3}

\begin{Figure}
\centering
\begin{tikzpicture}
\draw (0.0,0.0) -- (0.7,0.0) -- (0.35,0.6062) -- (0.0,0.0);
\end{tikzpicture}
\captionof*{ }{$\langle n^3 \rangle_{3}$}
\end{Figure}
\begin{Figure}
\centering
\begin{tikzpicture}
\draw (0.0,0.0) -- (0.45,0.0) -- (0.45,0.45) -- (0.9,0.45) -- (0.9,0.0);
\draw[dotted] (0.9,0.0) -- (1.35,0.0) -- (1.35,0.45) -- (1.8,0.45) -- (1.8,0.0);
\end{tikzpicture}
\captionof*{ }{$\langle n^3 \rangle_{4}$}
\end{Figure}
\begin{Figure}
\centering
\begin{tikzpicture}
\draw (0.0,0.0) -- (0.7,0.0) -- (0.9163,0.6657) -- (0.35,0.2543) -- (-0.2163,0.6657) -- (0.0,0.0);
\end{tikzpicture}
\captionof*{ }{$\langle n^3 \rangle_{5}$}
\end{Figure}
\begin{Figure}
\centering
\begin{tikzpicture}
\draw (0.0,0.0) -- (0.7,0.0) -- (1.05,0.6062) -- (0.7,1.2124) -- (0.0,1.2124) -- (-0.35,0.6062) -- (0.0,0.0);
\end{tikzpicture}
\captionof*{ }{$\langle n^3 \rangle_{6}$}
\end{Figure}
\begin{Figure}
\centering
\begin{tikzpicture}
\draw (0.0,0.0) -- (0.45,0.0) -- (0.7306,0.3518) -- (1.0111,0.7036) -- (1.2917,0.3518) -- (1.5723,0.7036) -- (1.8529,0.3518) -- (2.1334,0.0);
\draw[dotted] (2.1334,0.0) -- (2.5834,0.0) -- (2.864,0.3518) -- (3.1446,0.7036) -- (3.4251,0.3518) -- (3.7057,0.7036) -- (3.9863,0.3518) -- (4.2668,0.0);
\end{tikzpicture}
\captionof*{ }{$\langle n^3 \rangle_{7}$}
\end{Figure}
\begin{Figure}
\centering
\begin{tikzpicture}
\draw (0.0,0.0) -- (0.45,0.0) -- (0.7682,0.3182) -- (1.2182,0.3182) -- (0.9,0.6364) -- (1.35,0.6364) -- (1.0318,0.3182) -- (1.4818,0.3182) -- (1.8,0.0);
\draw[dotted] (1.8,0.0) -- (2.25,0.0) -- (2.5682,0.3182) -- (3.0182,0.3182) -- (2.7,0.6364) -- (3.15,0.6364) -- (2.8318,0.3182) -- (3.2818,0.3182) -- (3.6,0.0);
\end{tikzpicture}
\captionof*{ }{$\langle n^3 \rangle_{8}$}
\end{Figure}
\begin{Figure}
\centering
\begin{tikzpicture}
\draw (0.0,0.0) -- (0.45,0.0) -- (0.7947,0.2893) -- (1.1394,0.0);
\draw[dotted] (1.1394,0.0) -- (1.5894,0.0) -- (1.9342,0.2893) -- (2.2789,0.0);
\end{tikzpicture}
\captionof*{ }{$\langle n^3 \rangle_{9}$}
\end{Figure}
\begin{Figure}
\centering
\begin{tikzpicture}
\draw (0.0,0.0) -- (0.7,0.0) -- (1.2663,0.4114) -- (1.4826,-0.2543) -- (1.2663,-0.92) -- (0.7,-0.5086) -- (0.0,-0.5086) -- (-0.5663,-0.92) -- (-0.7826,-0.2543) -- (-0.5663,0.4114) -- (0.0,0.0);
\end{tikzpicture}
\captionof*{ }{$\langle n^3 \rangle_{10}$}
\end{Figure}
\begin{Figure}
\centering
\begin{tikzpicture}
\draw (0.0,0.0) -- (0.7,0.0) -- (1.2889,0.3784) -- (1.1893,-0.3144) -- (0.5176,-0.1172) -- (0.8084,-0.754) -- (0.35,-0.2249) -- (-0.1084,-0.754) -- (0.1824,-0.1172) -- (-0.4893,-0.3144) -- (-0.5889,0.3784) -- (0.0,0.0);
\end{tikzpicture}
\captionof*{ }{$\langle n^3 \rangle_{11}$}
\end{Figure}
\begin{Figure}
\centering
\begin{tikzpicture}
\draw (0.0,0.0) -- (0.7,0.0) -- (1.3062,0.35) -- (0.9562,-0.2562) -- (0.9562,0.4438) -- (0.6062,1.05) -- (0.0,1.4) -- (0.7,1.4) -- (0.0938,1.05) -- (-0.2562,0.4438) -- (-0.2562,-0.2562) -- (-0.6062,0.35) -- (0.0,0.0);
\end{tikzpicture}
\captionof*{ }{$\langle n^3 \rangle_{12}$}
\end{Figure}
\begin{Figure}
\centering
\begin{tikzpicture}
\draw (0.0,0.0) -- (0.45,0.0) -- (0.8485,0.2091) -- (0.5116,-0.0893) -- (0.9101,0.1198) -- (1.3085,-0.0893) -- (0.9717,-0.3877) -- (0.6349,-0.6861) -- (0.298,-0.3877) -- (-0.0388,-0.0893) -- (0.3597,0.1198) -- (0.7581,-0.0893) -- (0.4213,0.2091) -- (0.8198,0.0);
\draw[dotted] (0.8198,0.0) -- (1.2698,0.0) -- (1.6682,0.2091) -- (1.3314,-0.0893) -- (1.7298,0.1198) -- (2.1283,-0.0893) -- (1.7915,-0.3877) -- (1.4546,-0.6861) -- (1.1178,-0.3877) -- (0.781,-0.0893) -- (1.1794,0.1198) -- (1.5779,-0.0893) -- (1.241,0.2091) -- (1.6395,0.0);
\end{tikzpicture}
\captionof*{ }{$\langle n^3 \rangle_{13}$}
\end{Figure}
\begin{Figure}
\centering
\begin{tikzpicture}
\draw (0.0,0.0) -- (0.7,0.0) -- (1.3307,0.3037) -- (0.7,0.0) -- (1.3307,-0.3037) -- (0.7,-0.6074) -- (1.3307,-0.9112) -- (0.7,-0.6074) -- (0.0,-0.6074) -- (-0.6307,-0.9112) -- (0.0,-0.6074) -- (-0.6307,-0.3037) -- (0.0,0.0) -- (-0.6307,0.3037) -- (0.0,0.0);
\end{tikzpicture}
\captionof*{ }{$\langle n^3 \rangle_{14}$}
\end{Figure}
\begin{Figure}
\centering
\begin{tikzpicture}
\draw (0.0,0.0) -- (0.7,0.0) -- (1.3395,0.2847) -- (0.6548,0.1392) -- (0.8711,-0.5266) -- (0.7979,0.1696) -- (0.4479,0.7758) -- (-0.1184,1.1873) -- (0.35,0.6671) -- (0.8184,1.1873) -- (0.2521,0.7758) -- (-0.0979,0.1696) -- (-0.1711,-0.5266) -- (0.0452,0.1392) -- (-0.6395,0.2847) -- (0.0,0.0);
\end{tikzpicture}
\captionof*{ }{$\langle n^3 \rangle_{15}$}
\end{Figure}
\begin{Figure}
\centering
\begin{tikzpicture}
\draw (0.0,0.0) -- (0.7,0.0) -- (1.3467,0.2679) -- (0.6467,0.2679) -- (0.3788,-0.3788) -- (1.0788,-0.3788) -- (1.3467,-1.0256) -- (0.6467,-1.0256) -- (0.0,-0.7577) -- (0.7,-0.7577) -- (0.0533,-1.0256) -- (-0.6467,-1.0256) -- (-0.3788,-0.3788) -- (0.3212,-0.3788) -- (0.0533,0.2679) -- (-0.6467,0.2679) -- (0.0,0.0);
\end{tikzpicture}
\captionof*{ }{$\langle n^3 \rangle_{16}$}
\end{Figure}
\begin{Figure}
\centering
\begin{tikzpicture}
\draw (0.0,0.0) -- (0.7,0.0) -- (1.3527,0.2529) -- (0.6646,0.3815) -- (0.0695,0.013) -- (0.1341,-0.684) -- (-0.2878,-0.1254) -- (-0.4793,-0.7987) -- (-0.1673,-0.1721) -- (0.35,0.2995) -- (0.8673,-0.1721) -- (1.1793,-0.7987) -- (0.9878,-0.1254) -- (0.5659,-0.684) -- (0.6305,0.013) -- (0.0354,0.3815) -- (-0.6527,0.2529) -- (0.0,0.0);
\end{tikzpicture}
\captionof*{ }{$\langle n^3 \rangle_{17}$}
\end{Figure}
\begin{Figure}
\centering
\begin{tikzpicture}
\draw (0.0,0.0) -- (0.7,0.0) -- (1.3578,0.2394) -- (0.7,0.4788) -- (0.0,0.4788) -- (-0.6578,0.2394) -- (0.0,0.0);
\end{tikzpicture}
\captionof*{ }{$\langle n^3 \rangle_{18}$}
\end{Figure}
\begin{Figure}
\centering
\begin{tikzpicture}
\draw (0.0,0.0) -- (0.45,0.0) -- (0.8756,0.1461) -- (0.4799,0.3603) -- (0.0841,0.5745) -- (-0.2207,0.9055) -- (-0.6164,0.6914) -- (-0.9212,1.0224) -- (-0.4956,1.1686) -- (-0.07,1.0224) -- (-0.3748,1.3535) -- (-0.6795,1.0224) -- (-0.2539,1.1686) -- (0.1717,1.0224) -- (-0.1331,0.6914) -- (-0.5288,0.9055) -- (-0.8336,0.5745) -- (-1.2294,0.3603) -- (-1.6252,0.1461) -- (-1.1995,0.0);
\draw[dotted] (-1.1995,0.0) -- (-0.7495,0.0) -- (-0.3239,0.1461) -- (-0.7197,0.3603) -- (-1.1154,0.5745) -- (-1.4202,0.9055) -- (-1.816,0.6914) -- (-2.1208,1.0224) -- (-1.6951,1.1686) -- (-1.2695,1.0224) -- (-1.5743,1.3535) -- (-1.8791,1.0224) -- (-1.4535,1.1686) -- (-1.0278,1.0224) -- (-1.3326,0.6914) -- (-1.7284,0.9055) -- (-2.0332,0.5745) -- (-2.4289,0.3603) -- (-2.8247,0.1461) -- (-2.3991,0.0);
\end{tikzpicture}
\captionof*{ }{$\langle n^3 \rangle_{19}$}
\end{Figure}
\begin{Figure}
\centering
\begin{tikzpicture}
\draw (0.0,0.0) -- (0.7,0.0) -- (1.3657,0.2163) -- (0.7994,0.6278) -- (0.388,1.1941) -- (0.6043,1.8598) -- (0.6043,2.5598) -- (0.8206,1.8941) -- (1.2321,2.4604) -- (0.6657,2.0489) -- (0.0,2.2652) -- (0.7,2.2652) -- (0.0343,2.0489) -- (-0.5321,2.4604) -- (-0.1206,1.8941) -- (0.0957,2.5598) -- (0.0957,1.8598) -- (0.312,1.1941) -- (-0.0994,0.6278) -- (-0.6657,0.2163) -- (0.0,0.0);
\end{tikzpicture}
\captionof*{ }{$\langle n^3 \rangle_{20}$}
\end{Figure}
\begin{Figure}
\centering
\begin{tikzpicture}
\draw (0.0,0.0) -- (0.7,0.0) -- (1.3689,0.2063) -- (0.8558,0.6825) -- (0.7,1.3649) -- (1.3689,1.5712) -- (2.0378,1.3649) -- (1.882,2.0473) -- (1.532,2.6536) -- (1.0189,3.1297) -- (0.8631,2.4472) -- (0.35,1.9711) -- (-0.1631,2.4472) -- (-0.3189,3.1297) -- (-0.832,2.6536) -- (-1.182,2.0473) -- (-1.3378,1.3649) -- (-0.6689,1.5712) -- (0.0,1.3649) -- (-0.1558,0.6825) -- (-0.6689,0.2063) -- (0.0,0.0);
\end{tikzpicture}
\captionof*{ }{$\langle n^3 \rangle_{21}$}
\end{Figure}
\begin{Figure}
\centering
\begin{tikzpicture}
\draw (0.0,0.0) -- (0.7,0.0) -- (1.3716,0.1972) -- (0.9132,0.7262) -- (1.0129,1.4191) -- (1.6017,1.0407) -- (1.311,0.4039) -- (1.6017,-0.2328) -- (1.0129,-0.6113) -- (0.9132,0.0816) -- (1.3716,0.6106) -- (0.7,0.8078) -- (0.0,0.8078) -- (-0.6716,0.6106) -- (-0.2132,0.0816) -- (-0.3129,-0.6113) -- (-0.9017,-0.2328) -- (-0.6109,0.4039) -- (-0.9017,1.0407) -- (-0.3129,1.4191) -- (-0.2132,0.7262) -- (-0.6716,0.1972) -- (0.0,0.0);
\end{tikzpicture}
\captionof*{ }{$\langle n^3 \rangle_{22}$}
\end{Figure}
\begin{Figure}
\centering
\begin{tikzpicture}
\draw (0.0,0.0) -- (0.7,0.0) -- (1.374,0.1889) -- (0.9704,0.7607) -- (1.2924,1.3823) -- (1.4348,0.6969) -- (0.7928,0.9758) -- (0.2498,1.4175) -- (0.8479,1.0538) -- (0.8001,1.7522) -- (0.5657,1.0926) -- (-0.1278,1.1879) -- (0.35,0.6763) -- (0.8278,1.1879) -- (0.1343,1.0926) -- (-0.1001,1.7522) -- (-0.1479,1.0538) -- (0.4502,1.4175) -- (-0.0928,0.9758) -- (-0.7348,0.6969) -- (-0.5924,1.3823) -- (-0.2704,0.7607) -- (-0.674,0.1889) -- (0.0,0.0);
\end{tikzpicture}
\captionof*{ }{$\langle n^3 \rangle_{23}$}
\end{Figure}
\begin{Figure}
\centering
\begin{tikzpicture}
\draw (0.0,0.0) -- (0.7,0.0) -- (1.3761,0.1812) -- (1.0261,0.7874) -- (1.5211,1.2824) -- (1.1711,0.6761) -- (1.3523,1.3523) -- (2.0523,1.3523) -- (1.8711,2.0284) -- (1.5211,2.6347) -- (1.0261,3.1296) -- (0.6761,2.5234) -- (0.0,2.7046) -- (0.7,2.7046) -- (0.0239,2.5234) -- (-0.3261,3.1296) -- (-0.8211,2.6347) -- (-1.1711,2.0284) -- (-1.3523,1.3523) -- (-0.6523,1.3523) -- (-0.4711,0.6761) -- (-0.8211,1.2824) -- (-0.3261,0.7874) -- (-0.6761,0.1812) -- (0.0,0.0);
\end{tikzpicture}
\captionof*{ }{$\langle n^3 \rangle_{24}$}
\end{Figure}

\end{multicols}

\bibliographystyle{amsplain}

\end{document}